\newtheorem{theorem}{\textbf{Theorem}}
\newtheorem*{probstat*}{\textbf{Problem Statement}}
\newtheorem{lemma}{\textbf{Lemma}}
\newtheorem{define}{\textbf{Definition}}
\newtheorem{remark}{\textbf{Remark}}
\newtheorem{assumption}{\textbf{Assumption}}
\title{\LARGE \bf
Feasibility Evaluation of Quadratic Programs for Constrained Control 
}
\author{Panagiotis Rousseas$^{1}$ and Dimitra Panagou$^{2}$
\thanks{This work was supported by the National Science Foundation (NSF) under Award No. 1942907.}
\thanks{$^{1}$Panagiotis Rousseas is with the Division of Decision and Control Systems, School of Electrical Engineering and Computer Science, KTH Royal Institute of Technology, Stockholm Sweden 
        {\tt\small rousseas@kth.se}}%
\thanks{$^{2}$Dimitra Panagou is with the Department of Robotics and Department of Aerospace Engineering, University of Michigan, MI USA 
        {\tt\small dpanagou@umich.edu}}%
}
\begin{document}

\maketitle
\thispagestyle{empty}
\pagestyle{empty}

{ \begin{abstract}
This paper presents a computationally-efficient method for evaluating the feasibility of Quadratic Programs (QPs) for online constrained control. Based on the duality principle, we first show that the feasibility of a QP can be determined by the solution of a properly-defined Linear Program (LP). Our analysis yields a LP that can be solved more efficiently compared to the original QP problem, and more importantly, is simpler in form and can be solved more efficiently compared to existing methods that assess feasibility via LPs. The computational efficiency of the proposed method compared to existing methods for feasibility evaluation is demonstrated in comparative case studies as well as a feasible-constraint selection problem, indicating its promise for online feasibility evaluation of optimization-based controllers.
\end{abstract}}

\section{INTRODUCTION}
Constrained optimization and control have been extensively studied in the literature \cite{book_optim}, motivated in part by real-world problems that operate under limitations. In the context of constrained control synthesis in particular, optimization-based control formulations such as Model Predictive Control (MPC) \cite{Schwenzer2021} and Nonlinear MPC (NMPC) have been widely adopted, while more recently, Control Barrier Functions (CBFs) \cite{8796030} enable the formulation of constraints that act as safety filters in Quadratic Programs (QPs) that compute safe controllers. In all cases, feasibility of the underlying optimization problems is necessary for the derivation of control actions. These problems become particularly challenging in the presence of multiple constraints, since the latter might render the underlying problem infeasible.   

\par 
This paper considers the problem of assessing the  feasibility of QPs for online control synthesis, motivated in part by the wide range of constrained control techniques that fall into this optimization form. We are particularly interested in finding algorithms for feasibility assessment that are computationally efficient, since online control synthesis typically requires the ability to find a solution and close the loop fast enough (e.g., in the order of milliseconds for certain applications). { To this end, we adopt a duality principle viewpoint to characterize the feasibility of the QP in terms of the boundedness of the solution of the dual problem. More specifically, we show {  in Section \ref{sec:sec:pp-dp} that the boundedness of the solution of a properly-defined LP} is necessary and sufficient for the feasibility of the original QP. We then show in Section \ref{sec:disreg_cons} that feasibility of the original QP can be evaluated by solving a properly-defined LP that is simpler compared to the LPs formulated in existing methods that determine the feasibility of a set of linear constraints \cite{boyd2004convex, Nocedal2006-dw}. Numerical evaluations in Section \ref{sec:computational-comparison} indicate the  computational efficiency of our proposed method compared to existing approaches, which is crucial for online feasibility evaluation for control synthesis. 

\par 
{ We then consider and demonstrate how our feasibility assessment method can be applied to the problem of selecting a subset of compatible constraints when the original set of constraints is incompatible (and therefore the original problem is infeasible) and the constraints can be considered soft.} Finding the maximal subset of feasible constraints (called the maxFS problem) \cite{doi:10.1080/03155986.2019.1607715} is known to be NP-hard and is addressed with heuristic approaches \cite{782859,Chinneck2010-qg, Chinneck1996, 10644243} or predetermined priority specifications \cite{Dubois1996}. 
However, these methods do not evaluate the feasibility of the original problem, which is what our method addresses. A concept similar to feasibility assessment arises in chance-constrained optimization problems (CCPs) \cite{Campi2011}; specifically, quantifying the feasibility of the optimal solution w.r.t. constraint removal, along with algorithms on how to disregard the constraints, have been recently addressed in \cite{9750913, ROMAO2023110601}. However, in this context, feasibility refers to the probability that the stochastic constraint will be violated via finite samples of the random variables, and hence is not related to non-emptiness of the constrained convex set; in other words, the existence and uniqueness of the solution to the corresponding scenario approach-optimization problems is still assumed. To show the potential of applying our method to online feasibility assessment for constrained control synthesis, in Section \ref{sec:constr_sel} we demonstrate that the derived LP can be used for selecting compatible constraints online when the original problem is found to be infeasible. Finally,  we conclude our paper in Section \ref{sec:conclusions} with some thoughts for future work.}

\par 


\section{PROBLEM FORMULATION}
Consider the following QP:
\begin{equation}\label{eq:opt_prob_qp}
    \begin{split}
        u^\star = &\underset{u\in\mathbb{R}^m}{\textrm{arg}\min}\left\{ u^\top H u + F^\top u\right\}
        \triangleq 
        \underset{u\in\mathbb{R}^m}{\textrm{arg}\min}\left\{ \omega (u) \right\}
        , \\
        &\textrm{s.t.:} \quad A^\top u \leq B,
    \end{split}
\end{equation}
where $\omega:\mathbb{R}^m{ \rightarrow}\mathbb{R}$, $H\in\mathbb{R}^{m\times m}$ is a symmetric positive definite matrix, $F\in\mathbb{R}^m$, $A\in\mathbb{R}^{m\times C},B\in\mathbb{R}^{C}$, with $C\in\mathbb{N}$ denoting the number of constraints. We further denote the constraint index set as $\mathcal{C} = \left\{ 1,\cdots,C\right\}$, which is assumed to consist of two subsets, namely: the set of indexes $\mathcal{C}_h \subset \mathcal{C}$ that denote hard constraints, and the set of indexes $\mathcal{C}_s \subset \mathcal{C}$ that denote soft constraints, so that $\mathcal{C}_s \cup \mathcal{C}_h = \mathcal{C}$ and $\mathcal{C}_s \cap \mathcal{C}_h = \emptyset$. Hard constraints must be satisfied, while soft constraints should be satisfied as long as they do not cause Problem \eqref{eq:opt_prob_qp} to become infeasible (e.g., they can be thought of as non-critical specifications). 
\begin{assumption}\label{assum:hard_cons}
    Problem \eqref{eq:opt_prob_qp} is feasible when only the hard constraints are considered:
    \(
    \mathcal{U}_h = 
    \left\{ u\in\mathbb{R}^m |
        A_h^\top u \leq B_h
    \right\}
    \neq \emptyset,
    \)
    where \(A_h \triangleq \left[ A_{i_1}, A_{i_2},\cdots,A_{i_{C_h}} \right] \in\mathbb{R}^{m\times C_h} \) and \(B_h \triangleq \left[ B_{i_1}, B_{i_2},\cdots,B_{i_{C_h}} \right]^\top \in\mathbb{R}^{ C_h} \) with \(\mathcal{C}_h = \left\{ i_1,i_2,\cdots,i_{C_h}\right\}\) denoting the index set of hard constraints, $C_h = \left| \mathcal{C}_h\right|$ is the cardinality of the set and $A_{i}\in\mathbb{R}^m,\ i\in\mathcal{C}$, denotes the $i$-th column of the matrix $A\in\mathbb{R}^{m\times C}$ in \eqref{eq:opt_prob_qp}.
\end{assumption}

\begin{probstat*}
Given Assumption \ref{assum:hard_cons}, find a method for evaluating the feasibility of {  Problem \eqref{eq:opt_prob_qp}}, { where in the latter some/all of the constraints may be allowed to be violated.} 
\end{probstat*}
Our approach is to (i) introduce a method based on the duality principle for assessing the feasibility of Problem \eqref{eq:opt_prob_qp}, and (ii) if Problem \eqref{eq:opt_prob_qp} is infeasible, { formalize QPs that incorporate the violation of soft constraints, through the notion of a constraint configuration (which will be defined in the sequel) and which is introduced to explicitly account for} { such constraints that may be violated.}

\begin{remark}\label{rem:cbf}
    Problem \eqref{eq:opt_prob_qp} covers a wide range of constrained control synthesis problems; e.g., convergence and safety specifications can be encoded via { Control Lyapunov Functions (CLFs)} \cite{clf} and CBFs \cite{8796030} respectively, which lead to QP formulations. 
    Furthermore, linearly-constrained MPC problems also assume a QP formulation. 
    \end{remark}

\begin{remark}\label{rem:results}
    An illustrative example corresponding to a CBF-QP controller with time-varying constraints will be presented in the results { (see Sec. \ref{sec:constr_sel})}. 
\end{remark}

\section{METHODOLOGY}\label{sec:methodology}
\subsection{ A duality approach for assessing feasibility of \eqref{eq:opt_prob_qp}}\label{sec:sec:pp-dp}
We reformulate Problem \eqref{eq:opt_prob_qp}, { termed Primal Problem (PP),} using Lagrange multipliers (LMs) { $\lambda\in\mathbb{R}^C$} and constructing the augmented objective function ${ \bar{d}}:\mathbb{R}^m\times\mathbb{R}^C{ \rightarrow} \mathbb{R}$ as:
\begin{equation}\label{eq:opt_prob_kkt}
\begin{matrix}
\textrm{\textbf{PP:}} &
    \begin{split}
        u^\star = &\ \underset{u\in\mathbb{R}^m}{\textrm{arg}\min}\ 
                \underset{\lambda\geq 0}{\max}
                \left\{
                    \omega(u)
                    + \left( 
                        A^\top u -  B
                    \right)^\top\lambda
                \right\}
                \\
                \triangleq &\ 
                \underset{u\in\mathbb{R}^m}{\textrm{arg}\min}\ 
                \underset{\lambda\geq 0}{\max}
                \left\{
                    { \bar{d}}(u,\lambda)
                \right\}.
    \end{split}
\end{matrix}
\end{equation}
The corresponding Dual Problem (DP) \cite{book_optim} is formulated as:
\begin{equation}\label{eq:opt_prob_dual}
    \begin{matrix}
     \textrm{\textbf{DP:}} &
        \lambda^\star & = \underset{\lambda\geq 0 }{\textrm{arg}\max}\ 
                \underset{u\in\mathbb{R}^m}{\min}
                \left\{
                    { \bar{d}}(u,\lambda)
                \right\}.
    \end{matrix}
\end{equation}
The Karush-Kuhn-Tucker (KKT) condition for \eqref{eq:opt_prob_dual} is:
\begin{equation}\label{eq:stationary_kkt}
\begin{split}
        \left.\frac{\partial d}{\partial u}\right|_{\left(u^\star,\lambda\right)} &= 0 
        \overset{\eqref{eq:opt_prob_kkt}}{\Rightarrow} 
        2Hu^\star + F + A\lambda = 0 
        \Rightarrow \\
        \quad\quad \ \mathbb{R}^m \ni u^\star &= - \tfrac{1}{2}H^{-1}\left( F + A\lambda\right),
\end{split}
\end{equation}
while substituting \eqref{eq:stationary_kkt} in ${ \bar{d}}$ after some algebraic calculations yields { the dual cost function $d:\mathbb{R}^C\rightarrow\mathbb{R}$}:
\begin{equation}
\begin{split}
    &d(\lambda) \triangleq { \bar{d}}\left(u^\star(\lambda),\lambda\right) = \\
    &-\tfrac{1}{4}\|\sqrt{H^{-1}}A\lambda \|^2 - 
    \left(
        \tfrac{1}{2}F^\top H^{-1} A + B^\top
    \right) \lambda
    - \tfrac{1}{4}F^\top H^{-1}F, \notag 
\end{split}
\end{equation}
with $\sqrt{\cdot}$ denoting the square root of a positive definite matrix (note that $H^{-1}$ is also positive definite).
Therefore, omitting the constant term, the DP \eqref{eq:opt_prob_dual} becomes:
\begin{equation}\label{eq:opt_prob_dual*}
\begin{split}
    \lambda^\star = \underset{\lambda\geq 0 }{\textrm{arg}\max}
                \left\{
                    -\tfrac{1}{4}\|\sqrt{H^{-1}}A\lambda \|^2 - 
    \left(
        \tfrac{1}{2}F^\top H^{-1} A + B^\top
    \right) \lambda  
                \right\}.
\end{split}
\tag{\ref{eq:opt_prob_dual}*}
\end{equation}
Furthermore, the following holds for the elements of the LM vector $\lambda = \left[ \lambda_1, \lambda_2,\cdots,\lambda_C\right]^\top$ {\color{black}\cite{boyd2004convex}}:
\begin{equation}\label{eq:LM_property}
    \lambda_i = 
    \left\{
        \begin{matrix}
            0 &  A_i^\top {\color{black}u^\star_{\textrm{unc}}} \leq B_i \\
            {\lambda}^\star_i & A_i^\top {\color{black}u^\star_{\textrm{unc}}} > B_i
        \end{matrix}
    \right.,\quad i\in\mathcal{C},
\end{equation}
where $ {\lambda}^\star_i > 0$, {\color{black}$u^\star_{\textrm{unc}} = - \tfrac{1}{2}H^{-1} F \in\mathbb{R}^m$ is the \textbf{unconstrained}} optimal solution to the PP and $A_i \in \mathbb{R}^m$ denotes the $i$-th column of $A$ while $B_i\in\mathbb{R}$ denotes the $i$-th element of $B\in\mathbb{R}^C$, for $i\in\mathcal{C}$.
{\color{black}
\begin{theorem}\label{thm:lp}
    The PP \eqref{eq:opt_prob_qp} is feasible iff the maximum value {  $d^\star = d({ \tilde{\lambda}})$ to the following LP:
    \begin{equation}\label{eq:dual_lp}
       { \tilde{\lambda}} \in \arg\underset{\lambda \in \Lambda }{\max}
                \left\{  - B^\top \lambda    
                \right\},
    \end{equation}
    is bounded, i.e., $d^\star < \infty$}, where in \eqref{eq:dual_lp} $\Lambda = \left\{ \left. x \in \textrm{null}(A) \subset \mathbb{R}^C \right| x \geq 0 \right\}$.
\end{theorem}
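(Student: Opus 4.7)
The plan is to relate primal feasibility of \eqref{eq:opt_prob_qp} to boundedness of the dual QP \eqref{eq:opt_prob_dual*} via strong duality, and then to show that boundedness of the dual QP is equivalent to boundedness of the LP \eqref{eq:dual_lp}. Since $H \succ 0$, the primal is strongly convex, so it is either infeasible or attains a unique finite minimum. In the feasible case, strong duality for convex QPs gives that \eqref{eq:opt_prob_dual*} achieves the same finite optimal value; in the infeasible case, weak duality, combined with the fact that $\lambda = 0$ is dual-feasible, forces the dual supremum to be $+\infty$. Hence primal feasibility is equivalent to boundedness of the dual QP from above.

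Next I would reduce boundedness of the dual QP to boundedness of the LP. Along any ray $\lambda(t) = t\lambda_0$ with $\lambda_0 \geq 0$ and $t \to \infty$, the concave quadratic term $-\tfrac{1}{4} t^2 \|\sqrt{H^{-1}} A \lambda_0\|^2$ drives the dual objective to $-\infty$ unless $A\lambda_0 = 0$. Therefore the dual can only be unbounded above along rays with $\lambda_0 \in \Lambda$. On such rays the cross term $\tfrac{1}{2} F^\top H^{-1} A \lambda_0$ also vanishes, and the dual objective collapses to $-t\, B^\top \lambda_0$ up to an additive constant. Consequently the dual QP is unbounded iff there exists $\lambda_0 \in \Lambda$ with $-B^\top \lambda_0 > 0$, which, since $\lambda = 0 \in \Lambda$ already achieves LP value $0$, is precisely the statement that the LP \eqref{eq:dual_lp} is unbounded.

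The main obstacle is rigorously justifying that unboundedness of the concave-quadratic dual can only be realized along such straight rays, rather than by more exotic unbounded sequences. A clean way to bypass the recession-cone machinery is to invoke Farkas' lemma directly: the set $\{u : A^\top u \leq B\}$ is empty iff there exists $\lambda_0 \geq 0$ with $A\lambda_0 = 0$ and $B^\top \lambda_0 < 0$, i.e., $\lambda_0 \in \Lambda$ certifying LP unboundedness via scaling $t\lambda_0$. Conversely, if $u_0$ satisfies $A^\top u_0 \leq B$, then for every $\lambda \in \Lambda$ we have $B^\top \lambda \geq (A^\top u_0)^\top \lambda = u_0^\top(A\lambda) = 0$, so $-B^\top \lambda \leq 0$ on all of $\Lambda$, and the LP is bounded with $d^\star = 0$. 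This two-sided argument delivers the claimed equivalence cleanly, while the duality-based route remains preferable for the narrative of the paper since it is the one that produced the LP in the first place.
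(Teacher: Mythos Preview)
Your Farkas-based argument is correct and constitutes a complete proof, but it takes a genuinely different route from the paper's. The paper decomposes $\lambda = \lambda_1 + \lambda_2$ with $\lambda_1$ in the row space of $A$ and $\lambda_2 \in \mathrm{null}(A)$, splits the dual objective as $T_1(\lambda_1) + T_2(\lambda_2)$ with $T_2(\lambda_2) = -B^\top\lambda_2$, and then argues both directions by relating boundedness of $T_2$ on $\Lambda$ to boundedness of $T_2$ on the shifted set $\Lambda_+(\lambda_1^\star) = \{x \in \mathrm{null}(A) : x \geq -\lambda_1^\star\}$, invoking weak duality (Thm.~12.11 in \cite{Nocedal2006-dw}) and a case analysis along the way. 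Your approach bypasses this entirely via the theorem of alternatives: $\{u : A^\top u \leq B\} = \emptyset$ iff there exists $\lambda_0 \geq 0$ with $A\lambda_0 = 0$ and $B^\top\lambda_0 < 0$, which is exactly the LP-unboundedness certificate; the converse inequality $-B^\top\lambda \leq u_0^\top A\lambda = 0$ for any feasible $u_0$ and $\lambda \in \Lambda$ closes the loop. This is shorter and more elementary; the paper's decomposition, by contrast, keeps the argument inside the dual-QP narrative that produced the LP and makes the structural role of $\mathrm{null}(A)$ explicit. One minor slip in your duality sketch: weak duality alone does \emph{not} force the dual supremum to $+\infty$ when the primal is infeasible (it only gives $d^\star \leq p^\star = +\infty$, which is vacuous); the real reason the dual blows up is precisely the Farkas direction $\lambda_0$, along which $d(t\lambda_0) \to +\infty$. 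Since you fall back on Farkas anyway, this does not affect the overall correctness.
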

\begin{proof}
We begin the proof by decomposing the LM vector into two components:
      \(  \mathbb{R}^C\ni\lambda = \lambda_1 + \lambda_2 \),
    where $\lambda_1 \in \textrm{col}(A), \lambda_2 \in \textrm{null}(A)$. Hence, \eqref{eq:opt_prob_dual*} becomes:
    \begin{equation}\label{eq:opt_prob_dual**}
    \begin{split}
        \left( \lambda_1^\star , \lambda^\star_2\right) = \underset{\lambda_1 + \lambda_2\geq 0 }{\textrm{arg}\max}
        \left\{
        T_1(\lambda_1) 
        +T_2(\lambda_2)
        \right\},
    \end{split}
    \tag{\ref{eq:opt_prob_dual*}*}
    \end{equation}
    where $T_1(\lambda_1) \triangleq -\tfrac{1}{4}\|\sqrt{H^{-1}}A\lambda_1 \|^2 - 
    \left(
        \tfrac{1}{2}F^\top H^{-1} A + B^\top
    \right) \lambda_1$ and $T_2(\lambda_2) \triangleq - B^\top \lambda_2$. The functions $T_1,T_2$ can be maximized independently as long as $\lambda_1 + \lambda_2\geq 0$. Finally, consider the set $\Lambda_+(\lambda_1) = \{ x \in \textrm{null}(A) : x \geq -\lambda_1\}$. 
    We distinguish two cases: 1) $\textrm{null}(A)\bigcap \{\lambda \geq 0\} = \{0\}$. Then $T_2$ in \eqref{eq:opt_prob_dual**} vanishes and owing to convexity of $T_1$ and $\{\lambda \geq 0\}$ \eqref{eq:opt_prob_dual**} admits a unique maximum $\lambda^\star = \lambda_1^\star$ thus \eqref{eq:opt_prob_qp} also admits a unique minimizer through \eqref{eq:stationary_kkt}. 2) $\textrm{null}(A)\bigcap \{\lambda > 0\}\neq \emptyset$.
    (\textit{Sufficiency:}) Assume that the maximum $d^\star$ of \eqref{eq:dual_lp} is bounded. Consider now the following problem, equivalent to \eqref{eq:opt_prob_dual**}:\(
    \underset{\lambda_2\geq -\lambda_1^\star,\lambda_2\in\textrm{null}(A) }{\max} \left\{ \underset{\lambda_1\in\textrm{col}(A)}{ \max } \left\{T_1(\lambda_1) + T_2(\lambda_2)\right\} \right\}\). Concerning the inner optimization problem w.r.t. $\lambda_1\in \textrm{col}(A)$, we have that $\lambda_1 = {N}_A s$, where ${N}_A \in \mathbb{R}^{C\times K_A}, s\in\mathbb{R}^{K_A}$, $K_A = C - \textrm{dim}(\textrm{ker}(A))$ such that $A N_A s \neq 0, \forall s\in\mathbb{R}^{K_A} $. Thus, $T_1(\lambda_1) = T_1( N_A s )$ is concave w.r.t. $s$, owing to its quadratic form and its maximization yields a single maximizer $s^\star :\lambda_1^\star = N_A s^\star$. Hence, \eqref{eq:opt_prob_dual**} becomes: \( \underset{\lambda_2\in\Lambda_+(\lambda_1^\star)}{\max} \left\{ T_2(\lambda_2) \right\}\), which we will show admits a bounded maximum, that corresponds to a set of maximizers $\lambda_2^\star$ such that $T_2(\lambda_2^\star)\leq 0$. First, we show boundedness. Assume that $T_2$ is unbounded on the set $\Lambda_+(\lambda_1^\star)$. It is easy to see that $\Lambda_+(\lambda_1^\star) \bigcap \Lambda \neq \emptyset$, hence unboundedness of $T_2$ on $\Lambda_+(\lambda_1^\star)$ implies its unboundedness on $\Lambda$, leading to a contradiction (we have assumed that \eqref{eq:dual_lp} is bounded). To show that $T_2(\lambda_2^\star)\leq 0$, consider the function $\beta(\theta) = (-B^\top \lambda_2^\star)\theta \in \textrm{null}(A), \theta\in\mathbb{R}$. Since $\lambda_2^\star \in \Lambda_+(\lambda_1^\star),\lambda_2^\star\neq \lambda_1^\star$, then $\exists \theta\in\mathbb{R}: \theta\lambda_2^\star \in \Lambda_+(\lambda_1^\star)$. Assuming for contradiction that $(-B^\top \lambda_2^\star)>0$, its derivative is $\tfrac{\textrm{d}\beta}{\textrm{d}\theta} = (-B^\top \lambda_2^\star)>0$. This implies that $\beta(\theta)$ can be increased, and thus $T_2(\theta\lambda_2^\star) = -B^\top (\theta \lambda_2^\star)$ can also be increased, leading to a contradiction as $\exists \theta\in\mathbb{R}: T_2(\theta\lambda_2^\star)>T_2(\lambda_2^\star)$ implying that $\lambda_2^\star$ is not a maximizer. Thus, \eqref{eq:opt_prob_dual*} attains a bounded maximum at the set of maximizers $\lambda^\star = \lambda_1^\star + \lambda_2^\star$ and the PP solution is $u^\star = -\tfrac{1}{2}H^{-1}\left( F + A\lambda^\star \right)= -\tfrac{1}{2}H^{-1}\left( F + A\lambda_1^\star \right)$, where $\lambda_2^\star$ is not necessarily unique. To show that this is indeed a feasible minimizer to \eqref{eq:opt_prob_qp}, the following holds from \eqref{eq:stationary_kkt}: ${d}(\lambda_1^\star + \lambda_2^\star) = \omega(u^\star) - B^\top\lambda_2^\star \Rightarrow \omega(u^\star) \geq {d}(\lambda^\star)$, since $(-B^\top \lambda_2^\star)<0$. Thus, the weak duality principle (Thm. 12.11 in \cite{Nocedal2006-dw}) is satisfied and $u^\star$ is indeed a feasible solution to \eqref{eq:opt_prob_qp}. (\textit{Necessity:}) Assume now that \eqref{eq:opt_prob_qp} is feasible, implying that a single maximizer $u^\star\in\mathbb{R}^m$ to \eqref{eq:opt_prob_qp} exists. Therefore, $\exists \lambda^\star = \lambda_1^\star + \lambda_2$ such that \eqref{eq:stationary_kkt} holds, for any $\lambda_2\in\textrm{null}(A)$. 
    Due to the uniqueness of $u^\star$ in \eqref{eq:stationary_kkt}, $\lambda_1^\star\in\textrm{col}(A)$ is unique. Thus, Eq. \eqref{eq:opt_prob_dual**} becomes: 
        \( \lambda_2^\star  \in  \underset{\lambda_2 \in \Lambda_+(\lambda_1^\star)}{\textrm{arg}\max}
        \left\{
        T_1(\lambda_1^\star) + T_2(\lambda_2) 
        \right\} = 
        \underset{\lambda_2 \in \Lambda_+(\lambda_1^\star)}{\textrm{arg}\max}
        \left\{
        T_2(\lambda_2) 
        \right\}\).
    By the weak duality principle (Thm. 12.11 in \cite{Nocedal2006-dw}), since \eqref{eq:opt_prob_qp} is by assumption feasible (thus its minimum is bounded), then \eqref{eq:opt_prob_dual*} is also bounded, rendering $\underset{\lambda \in \Lambda_+(\lambda_1^\star)}{\max}
        \left\{
        T_2(\lambda) 
        \right\}$ bounded as well. Assume for the sake of contradiction that the solution to \eqref{eq:dual_lp} is \textbf{\textit{unbounded}}, i.e., $T_2$ is unbounded on $\Lambda$. However, it is easy to see that $\Lambda_+(\lambda_1^\star) \bigcap \Lambda \neq \emptyset$. Therefore, if $T_2$ is unbounded on $\Lambda$, it is necessarily unbounded on $\Lambda_+(\lambda_1^\star)$, contradicting the weak duality principle. Therefore, the solution to \eqref{eq:dual_lp} is bounded, concluding the proof.
        
    
\end{proof}
}
\subsection{ A Linear Program for evaluating feasibility}\label{sec:disreg_cons}
Consider the PP \eqref{eq:opt_prob_qp}, where the constraint matrices are split into hard and soft constraint ones, { i.e.: \( A = \left[ A_h, A_s\right]\in\mathbb{R}^{m \times C},\ B = \left[ B_h^\top, B_s^\top \right]^\top \in\mathbb{R}^{C}\) and \(A_h \in\mathbb{R}^{m \times C_h}, A_s \in\mathbb{R}^{m \times C_s}, B_h \in\mathbb{R}^{C_h},B_s\in\mathbb{R}^{C_s}\)}. The hard constraints must always be enforced, while the soft constraints can be disregarded if they compromise the feasibility of \eqref{eq:opt_prob_qp}. 
\begin{define}[\textbf{Disregarded Constraint}]
    { Consider Problem \eqref{eq:opt_prob_qp}. A constraint $\bar{A}^\top u \leq \bar{B}$, where $\bar{A}\in\mathbb{R}^m$ and $\bar{B}\in\mathbb{R}$, is defined as \textbf{disregarded} if its complementary set is enforced as a constraint in Problem \eqref{eq:opt_prob_qp} instead, i.e.:
    \(
        -\bar{A}^\top u \leq -\bar{B}. \)
    }
\end{define}
In order to account for disregarded soft constraints, we employ the following notation. Given a set of soft constraints indexed by the set $\mathcal{C}_s\subset \mathcal{C}$ with cardinality $\mathbb{N} \ni C_s = \left|\mathcal{C}_s\right|$, consider the vector $P_i = \left[ p_1^i,p_2^i,\cdots,p_{C}^i\right]^\top\in\mathcal{P}\triangleq \{-1,+1\}^{C}, i\in\mathcal{C}_p$ with $\mathcal{C}_p = \{ 1,\cdots,C_p\}$, with $C_p = 2^{C}$, whose elements are given by:
\(    p_j^i = 
    \left\{
        \begin{matrix}
            -1, & \textrm{if constraint $j$ is disregarded} \\
            +1, & \textrm{if constraint $j$ is \textbf{not} disregarded}
        \end{matrix}
    \right.,\)
for some $j\in \{ 1,\cdots,C\}$ and $i\in\mathcal{C}_p$. This vector encodes any permutation of disregarded soft constraints for the original PP \eqref{eq:opt_prob_qp}.
\begin{define}\label{def:config}
    Given the PP \eqref{eq:opt_prob_qp}, the { vector} \(P_i\in\mathcal{P}, \textrm{ where }i\in\mathcal{C}_p,\) is called the \textbf{configuration} vector, or simply configuration, for \eqref{eq:opt_prob_qp}. Furthermore, the \textbf{level} of the configuration is defined as:
    \(\mathbb{N} \ni L(P_i) \triangleq  \underset{j\in\mathcal{C}}{\sum}\max\{p^i_j,0\}\). 
\end{define}
The level counts the number of constraints { in a configuration} that are not disregarded. Finally, a configuration $P_i\in\mathcal{P},i\in\mathcal{C}_p$ is termed feasible if the following modified PP is feasible:
\begin{equation}\label{eq:opt_prob_qp*}
    \begin{split}
        &u^\star = \underset{u\in\mathbb{R}^m}{\textrm{arg}\min}\left\{\omega(u)\right\}, \\
        &\textrm{s.t.:}  \quad \ S(P_i)A^\top u \leq S(P_i)B,
    \end{split}
    \tag{\ref{eq:opt_prob_qp}*}    
\end{equation}
where:
\(
        S(P_i) \triangleq \textrm{diag}
        \{
            P_i
        \} \in \mathbb{R}^{C \times C},
\)
and $\textrm{diag}(\cdot)$ denotes the square matrix with zero entries everywhere except for the diagonal, while its entries are the elements of the configuration vector $P_i$ for some $i\in\mathcal{C}_p$. The matrix $S(P_i)$ flips the signs of the disregarded soft constraints, compared to \eqref{eq:opt_prob_qp}. 
\begin{remark}
    The elements of the configuration vector that correspond to hard constraints are always set equal to $1$:
    \(
        p_j^i = 1,\ i\in\mathcal{C}_p,\ j\in\mathcal{C}_h.
    \)
\end{remark}
To demonstrate how this formulation is useful, we prove the following lemma:
\begin{lemma}\label{lem:disregard_equiv}
    Consider the PP \eqref{eq:opt_prob_qp} consisting of the set of hard and soft constraints indexed by $\mathcal{C}_h\subset \mathcal{C}$ and $\mathcal{C}_s\subset \mathcal{C}$ respectively. Assume two configurations $P_{\textrm{feas}}, P_{\textrm{inf}}\in\{-1,+1\}^{C_s}$, where $P_{\textrm{feas}}$ is assumed to be feasible, while $P_{\textrm{inf}}$ is infeasible. Further assume w.l.o.g. that the two configurations differ\footnote{ Note that for the infeasible configuration to become feasible, some of the constraints need to be disregarded. Hence, for some $p_{id}^{inf}$ of the infeasible problem that correspond to non-disregrded constraints (and thus $p_{id}^{inf} = 1$), the associated constraints need to be disregarded, thus set to $p_{id}^{feas}=-1$.} over the index  set $\mathcal{C}_d \subseteq \mathcal{C}_s$ and $p^{\textrm{feas}}_{i_d} = -1$ and $p^{\textrm{inf}}_{i_d} = 1, \forall i_d\in\mathcal{C}_d$. The common indices are denoted as $\mathcal{C}_c = \mathcal{C}-\mathcal{C}_d$.
    Finally, consider two PPs: { the original PP \eqref{eq:opt_prob_qp} where the constraints indexed by $\mathcal{C}_d$ are dropped from the matrices $A,B$, and the} modified PP \eqref{eq:opt_prob_qp*}  for configuration $P_{\textrm{feas}}$. Then, these two PPs admit the same minimizer. 
\end{lemma}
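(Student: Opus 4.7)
The plan is to exploit strict convexity of $\omega$ (guaranteed by $H \succ 0$), which makes the minimizer of any feasible QP unique, and then collapse the desired equivalence to checking KKT conditions for the modified PP at the dropped PP's minimizer. I would first observe that the feasible set of the modified PP is a subset of the feasible set of the dropped PP: both share the constraints $A_i^\top u \leq B_i$ for $i \in \mathcal{C}_c$, while the modified PP additionally enforces the flipped inequalities $-A_{i_d}^\top u \leq -B_{i_d}$ for each $i_d \in \mathcal{C}_d$. This containment immediately yields $\omega(u^\star_{\textrm{mod}}) \geq \omega(u^\star_{\textrm{drop}})$, where $u^\star_{\textrm{drop}}$ and $u^\star_{\textrm{mod}}$ denote the minimizers of the dropped and modified PPs respectively.

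For the reverse direction I would construct candidate KKT multipliers for the modified PP from those of the dropped PP by setting $\mu_i = \lambda_i^d$ for $i \in \mathcal{C}_c$ (as given by the dropped PP's KKT) and $\mu_{i_d} = 0$ for $i_d \in \mathcal{C}_d$. With this assignment, the stationarity equation of the modified PP reduces exactly to that of the dropped PP, since each $-\mu_{i_d} A_{i_d}$ term vanishes; dual feasibility is preserved (all multipliers are nonnegative); and complementary slackness holds trivially on $\mathcal{C}_d$ (because $\mu_{i_d}=0$) and is inherited on $\mathcal{C}_c$ from the dropped PP's KKT.

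The main obstacle is verifying primal feasibility of $u^\star_{\textrm{drop}}$ against the flipped constraints, i.e., $A_{i_d}^\top u^\star_{\textrm{drop}} \geq B_{i_d}$ for every $i_d \in \mathcal{C}_d$. My approach here is a contradiction argument anchored on the infeasibility of $P_{\textrm{inf}}$: since $P_{\textrm{inf}}$ and $P_{\textrm{feas}}$ coincide on $\mathcal{C}_c$ and $P_{\textrm{inf}}$ additionally enforces the $\mathcal{C}_d$ constraints in their original form $A_{i_d}^\top u \leq B_{i_d}$, any point simultaneously satisfying the $\mathcal{C}_c$ constraints and all $A_{i_d}^\top u < B_{i_d}$ would constitute a feasible point for the $P_{\textrm{inf}}$ configuration, contradicting the infeasibility hypothesis. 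This forces $u^\star_{\textrm{drop}}$ into the feasible set of the modified PP.

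Once primal feasibility is established, $u^\star_{\textrm{drop}}$ is a feasible candidate for the modified PP, giving $\omega(u^\star_{\textrm{mod}}) \leq \omega(u^\star_{\textrm{drop}})$. Combined with the containment-based inequality and the uniqueness of the strictly convex QP minimizer, we conclude $u^\star_{\textrm{mod}} = u^\star_{\textrm{drop}}$, completing the proof.
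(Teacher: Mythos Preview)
Your proposal follows essentially the same route as the paper: both extend the KKT multipliers of the dropped problem by zeros on $\mathcal{C}_d$, and both invoke the infeasibility of $P_{\mathrm{inf}}$ via a contradiction to argue that $u^\star_{\mathrm{drop}}$ satisfies every flipped constraint $A_{i_d}^\top u \ge B_{i_d}$. The feasible-set containment you add up front is a clean way to get one inequality on the optimal values, but the core mechanism is identical to the paper's Lagrangian argument.

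There is, however, a genuine gap in your contradiction step (and the paper's argument has the same issue). From the infeasibility of $P_{\mathrm{inf}}$ you can only conclude that $u^\star_{\mathrm{drop}}$ fails to satisfy \emph{all} of the original $\mathcal{C}_d$ constraints simultaneously; the negation of ``all $A_{i_d}^\top u^\star_{\mathrm{drop}} < B_{i_d}$'' is merely ``some $A_{i_d}^\top u^\star_{\mathrm{drop}} \ge B_{i_d}$'', not the universal statement you need for primal feasibility in the modified PP. A concrete failure: take $m=2$, $\omega(u)=\|u\|^2$, no hard constraints, and soft constraints $u_1\le -1$, $-u_1\le -1$, $u_2\le 10$. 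With $P_{\mathrm{inf}}=(+1,+1,+1)$ the system is infeasible; flipping all three yields a feasible $P_{\mathrm{feas}}=(-1,-1,-1)$. The dropped PP is unconstrained with minimizer $(0,0)$, while the modified PP for $P_{\mathrm{feas}}$ (which now enforces $u_2\ge 10$) has minimizer $(0,10)$. Infeasibility of $P_{\mathrm{inf}}$ does not stop $(0,0)$ from satisfying the third original constraint, so your primal-feasibility step breaks here. The argument only goes through under an extra minimality-type hypothesis on $\mathcal{C}_d$ (roughly, that no proper subset of flips already restores feasibility), which neither your proposal nor the paper's proof states.
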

\begin{proof}
    Consider the augmented cost function for Prob. \eqref{eq:opt_prob_qp}:
    \begin{equation}\label{eq:aug_fun_lem}
        d(u,\lambda) = 
        \omega(u)
                    + \underset{i\in\mathcal{C}_c}{\sum}\left( 
                        (A_s^i)^\top u -  B_s^i
                    \right)^\top\lambda_i,
    \end{equation}
    where each $\lambda_i\in\mathbb{R}_+,i\in\mathcal{C}_c$ obeys \eqref{eq:LM_property}. The optimal solution to this PP is denoted by the tuple $(u_1^\star,{ \tilde{\lambda}}_1)$, and the optimal augmented cost function is denoted by $d^\star_1 =  d(u_1^\star,{ \tilde{\lambda}}_1)$. {\color{black}Note that the minimizer $(u_1^\star,{ \tilde{\lambda}}_1)$ is unique, owing to convexity of the PP \eqref{eq:opt_prob_qp}}. Since the configuration $P_{\textrm{inf}}$ is infeasible, {then for the minimizer of \eqref{eq:aug_fun_lem} $(u_1^\star,{ \tilde{\lambda}}_1)$}:
    \begin{equation}\label{eq:constr_id_lem}
        -(A_s^{i_d})^\top u^\star_1 <  -B_s^{i_d}, \ \forall i_d\in\mathcal{C}_d.
    \end{equation}
    To see this, if there existed an optimal solution to \eqref{eq:opt_prob_qp} such that: $-(A_s^{i_d})^\top u^\star_1 \geq  -B_s^{i_d}\Leftrightarrow (A_s^{i_d})^\top u^\star_1 \leq  B_s^{i_d}$, $\forall i_d\in\overline{\mathcal{C}}_d$, for any $\overline{\mathcal{C}}_d \subseteq \mathcal{C}_d$, this would imply that the configuration $P_{\textrm{inf}}$ is not infeasible, leading to a contradiction. 
    Thus, constraint \eqref{eq:constr_id_lem} can be incorporated into the augmented function \eqref{eq:aug_fun_lem}. Since it holds, its associated LMs are $\lambda_{i_d} = 0,\ \forall i_d\in\overline{\mathcal{C}}_d$. Thus:
    \begin{equation}\label{eq:lem_1_d1}
    \begin{split}
        d_1^\star = d(u,\tilde{\lambda}_1)
                    - \underset{i_d\in\mathcal{C}_d}{\sum}\underbrace{\left( (A_s^{i_d})^\top u^\star  -B_s^{i_d}\right)^\top\lambda_{i_d}}_{=0},
    \end{split}
    \end{equation}
    where $({ \tilde{\lambda}}_1)_i\in\mathbb{R}_+$ denotes the $i$-th component of the vector ${ \tilde{\lambda}}_1$. However, note that the augmented cost function for Problem \eqref{eq:opt_prob_qp*} given the configuration $P_{\textrm{feas}}$ is identical to $d_1^\star$ \eqref{eq:lem_1_d1}. Thus, they share the same LM minimizer with the exception of $\#\left( \mathcal{C}_d \right)$ additional terms for Problem \eqref{eq:opt_prob_qp*}: ${ \tilde{\lambda}}_2 = \left[ ({ \tilde{\lambda}}_1)^\top, 0,\cdots,0\right]^\top$, where ${ \tilde{\lambda}_2\in\mathbb{R}^{C}}$ denotes the LM vector minimizer for Problem \eqref{eq:opt_prob_qp*}. 
\end{proof}
{ 
\begin{remark}
Lem. \ref{lem:disregard_equiv} demonstrates that switching the sign of the constraints (disregarding them) that render Problem \eqref{eq:opt_prob_qp} infeasible is equivalent to solving the same QP with the constraints completely omitted.
\end{remark}
}
{ The following theorem is our main result and outlines the proposed LP for feasibility evaluation.}
\begin{theorem}
Problem \eqref{eq:opt_prob_qp*} under the configuration $P_i \in \mathcal{P}, i\in\mathcal{C}_p$ (See Def. \ref{def:config}) is feasible iff the following LP admits a bounded maximum, i.e, $d^\star < \infty$:
\begin{equation}\label{eq:dual_lp*}
\begin{split}
&\hspace{80pt}d^\star = \underset{\lambda \in \Lambda_i }{\max}
            \left\{  - B^\top \lambda    
            \right\},\\
&\textrm{where the bounds for the elements $\lambda_j\in\mathbb{R}$ of $\lambda\in\mathbb{R}^C$ are:}\\
&\Lambda_i = \left\{ \left. \lambda \in \textrm{null}(A) \subset \mathbb{R}^C \right| \lambda_j \in 
            \left\{
                \begin{matrix}
                    (-\infty, 0], & \textrm{if } p^i_j = -1 \\
                    [0,+\infty), & \textrm{if } p^i_j = +1 
                \end{matrix}
            \right. \right\},
\end{split}
\tag{\ref{eq:dual_lp}*}
\raisetag{60pt}
\end{equation}
where $P_i = \left[ p_1^i,p_2^i,\cdots,p_{C}^i\right]^\top\in\mathcal{P}, i\in\mathcal{C}_p$ and $\textrm{null}(\cdot)$ denotes the nullspace of a matrix.
\end{theorem}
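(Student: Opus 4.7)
The plan is to reduce the claim to Theorem \ref{thm:lp} via an invertible change of variables induced by the sign-flip matrix $S(P_i)$. First I would observe that the modified PP \eqref{eq:opt_prob_qp*} is itself an instance of the standard form \eqref{eq:opt_prob_qp}: the objective $\omega(u)$ and Hessian $H$ are unchanged (so $H$ is still symmetric positive definite), and the constraint $S(P_i)A^\top u \leq S(P_i)B$ can be rewritten as $\tilde{A}^\top u \leq \tilde{B}$ with $\tilde{A} \triangleq A S(P_i) \in \mathbb{R}^{m\times C}$ and $\tilde{B} \triangleq S(P_i) B \in \mathbb{R}^{C}$, using symmetry of the diagonal matrix $S(P_i)$. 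Since the hypotheses of Theorem \ref{thm:lp} apply verbatim to the pair $(\tilde{A},\tilde{B})$, feasibility of \eqref{eq:opt_prob_qp*} is equivalent to boundedness of
\begin{equation*}
    \max_{\mu \in \tilde{\Lambda}} \left\{ -\tilde{B}^\top \mu \right\}, \qquad \tilde{\Lambda} = \{\mu \in \textrm{null}(\tilde{A}) : \mu \geq 0\}.
\end{equation*}

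Next I would apply the substitution $\lambda = S(P_i)\mu$. Because each diagonal entry of $S(P_i)$ is $\pm 1$, $S(P_i)$ is its own inverse, so this map is a bijection of $\mathbb{R}^C$, and $\mu = S(P_i)\lambda$. I would then verify three transformations: (i) the objective becomes $-\tilde{B}^\top \mu = -B^\top S(P_i) S(P_i)\lambda = -B^\top \lambda$; (ii) the nullspace condition $\tilde{A}\mu = 0$ becomes $AS(P_i)S(P_i)\lambda = A\lambda = 0$, i.e., $\lambda \in \textrm{null}(A)$; and (iii) the componentwise constraint $\mu_j \geq 0$ becomes $p_j^i \lambda_j \geq 0$, which reads $\lambda_j \in [0,+\infty)$ when $p_j^i = +1$ and $\lambda_j \in (-\infty, 0]$ when $p_j^i = -1$. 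These three observations together identify the transformed LP with \eqref{eq:dual_lp*} on the set $\Lambda_i$.

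Since the change of variables is a bijection between $\tilde{\Lambda}$ and $\Lambda_i$ that preserves the objective value pointwise, boundedness of the maximum is preserved in both directions, giving the desired if-and-only-if statement. I do not foresee a substantive obstacle here: the only point requiring care is the sign bookkeeping in step (iii), ensuring the polarity of the half-line for each $\lambda_j$ matches the sign of $p_j^i$, which follows directly from $\mu_j = p_j^i \lambda_j \geq 0$. The heavy lifting---the equivalence between feasibility of a QP and boundedness of the dual LP over the nonnegative orthant of $\textrm{null}(\cdot)$---is already supplied by Theorem \ref{thm:lp}, so the present theorem is essentially a corollary obtained by absorbing $S(P_i)$ into the dual variable.
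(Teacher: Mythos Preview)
Your proposal is correct and follows essentially the same argument as the paper: apply Theorem~\ref{thm:lp} to the modified problem with data $(\tilde A,\tilde B)=(AS(P_i),S(P_i)B)$ and then perform the bijective change of variables induced by the involutive sign-flip matrix $S(P_i)$ to convert the nonnegativity constraint into the mixed-sign orthant $\Lambda_i$. The only cosmetic difference is that the paper prefaces the computation with a remark invoking Lemma~\ref{lem:disregard_equiv}, which your version (rightly) observes is not needed for the theorem as stated.
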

{ 
\begin{proof}
According to Lem. \ref{lem:disregard_equiv}, disregarding a constraint is equivalent to negating the sign of a constraint. However, negating the sign of a constraint is equivalent to restricting the associated LM to lie on the interval $\lambda\in(-\infty,0]$. To see this, consider the problem with the negated constraints \eqref{eq:opt_prob_qp*}. According to Thm. \ref{thm:lp}, Prob. \eqref{eq:opt_prob_qp*} is feasible iff the solution to the LP:
\begin{equation}\label{eq:lp_thm2}
    d^\star = \underset{\lambda \in \Lambda }{\max}
                \left\{  - B^\top S(P_i) \lambda    
                \right\},
\end{equation}
is bounded, i.e., $d^\star < \infty$, where $\Lambda = \left\{ \left. \lambda \in \textrm{null}(A S(P_i)) \subset \mathbb{R}^C \right| \lambda \geq 0 \right\}$. Consider the transformation \( \mu = S(P_i)\lambda\). Then, \eqref{eq:lp_thm2} becomes
\begin{equation}\label{eq:lp_thm2_2}
    d^\star = \underset{\mu \in M }{\max}
                \left\{  - B^\top \mu    
                \right\},
\end{equation}
where \(\lambda \in \textrm{null}(AS(P_i)) \Rightarrow AS(P_i)\lambda = 0 \Rightarrow A\mu = 0 \Rightarrow \mu \in \textrm{null}(A)\). At the same time, $\lambda \geq 0$ implies that: \(  \mu_j \in 
            \left\{
                \begin{matrix}
                    (-\infty, 0], & \textrm{if } p^i_j = -1 \\
                    [0,+\infty), & \textrm{if } p^i_j = +1 
                \end{matrix}
            \right.\)
, which yields: 
\(
M_i = \left\{ \left. \mu \in \textrm{null}(A) \subset \mathbb{R}^C \right| \mu_j \in 
            \left\{
                \begin{matrix}
                    (-\infty, 0], & \textrm{if } p^i_j = -1 \\
                    [0,+\infty), & \textrm{if } p^i_j = +1 
                \end{matrix}
            \right. \right\}
\). However, problem \eqref{eq:lp_thm2_2} along with the definition of the set $M_i$ is exactly problem \eqref{eq:dual_lp*}, which concludes the proof.
\end{proof}
}
{ 
\begin{remark}
    The Problem Statement evidently can be reformulated only in terms of (non)emptiness of the feasible set $\{ u\in\mathbb{R}^m | A^\top u \leq B\}$, for which solutions exist  \cite{boyd2004convex,Nocedal2006-dw}. { Our analysis begins with the CBF-QP inspired PP \eqref{eq:opt_prob_qp}, which leads to the QP DP \eqref{eq:opt_prob_dual*}. The quadratic form of \eqref{eq:opt_prob_dual*} enables extracting condition \eqref{eq:dual_lp*} that assesses feasibility of \eqref{eq:opt_prob_qp}, implying non-emptiness of the feasible set. Our approach exhibits decreased execution time, which is supported theoretically (see the following remark) and validated through numerical simulations (see Section \ref{sec:meth_eval}).} 
\end{remark}
}
{ 
\begin{remark}\label{rem:boyd_lp}
 The proposed LP \eqref{eq:dual_lp*} consists of $C$ decision variables and is subject to $m$ linear \textbf{equality} constraints, and $C$ inequality constraints for the bounds of the LMs. In contrast, consider the LP proposed in \cite{boyd2004convex,Nocedal2006-dw}: 
 \begin{equation}\label{eq:lp_boyd}
\begin{split}
    &\underset{u\in\mathbb{R}^m,z\in\mathbb{R}^C}{\min} c^\top z\\
    &\mathrm{s.t.:}\quad A^\top u - z \leq B,\ z\geq 0,   
\end{split}
\end{equation}
which consists of $m+C$ decision variables and is subject to $2C$ linear \textbf{inequalities} and $C$ inequalities for the bounds of the auxiliary variable $z\in\mathbb{R}^C$. { Note that LP time complexity (see \cite{10.1145/3313276.3316303} for a recent result) is dependent on the number of variables/constraints. This theoretically indicates that the proposed method is expected to be solved faster} than the LP \eqref{eq:lp_boyd} { due to fewer decision variables and constraints}, which is supported by the numerical analysis that follows and further justifies the motivation and theoretical work that is the backbone of our approach.
\end{remark}
}
\section{ METHODOLOGY EVALUATION}\label{sec:meth_eval}
In the section we present two simulation case studies. The first study compares the computational load of our method \eqref{eq:dual_lp*} { against several different approaches.
The second study illustrates an academic example inspired by CBF-QP controllers, demonstrating the applicability of our method in finding compatible constraints for online control synthesis.}
\subsection{Computational Load Comparison}\label{sec:computational-comparison}
{ 
Numerical evaluation to highlight the computational efficiency of the proposed method (i.e., Eq. \eqref{eq:dual_lp*}) is included in Figs. \ref{fig:comp_time_qp}, \ref{fig:comp_time_lp}. More specifically, Fig. \ref{fig:comp_time_qp} illustrates the feasibility evaluation time for the proposed method using the GLPK LP solver \url{https://www.gnu.org/software/glpk/} (last sub-figure) compared against the solution of the PP (first sub-figure), the DP (second sub-figure), and Chinneck's method \cite{Chinneck1996} with relaxation variables (third sub-figure) using MATLAB's QP solver. { This comparison demonstrates that the { pre-solution feasibility check} of { MATLAB's} QP solver is slower than \eqref{eq:dual_lp*}.} In Fig. \ref{fig:comp_time_lp} the proposed method is presented both for a MATLAB implementation and a GLPK one (second and last sub-figures) versus a ``Phase 1'' LP-based method \cite{boyd2004convex,Nocedal2006-dw} for feasibility evaluation, i.e. Eq. \eqref{eq:lp_boyd}.
\par 
The execution times are depicted through the colormap as a function of the number of constraints (varying up to 1000) and the number of dimensions of the PP's decision variables (varying up to 50). The relevant constraint matrices are generated randomly and thus the PP is probably infeasible as the number of constraints grows. Ten trials are executed for every problem instance for statistical significance, and the mean values are plotted.
The proposed method is significantly faster than the rest of the approaches. Since the PP is probably infeasible, the times in Fig. \ref{fig:comp_time_qp} reflect how fast the respective methods evaluate infeasibility, demonstrating the superiority of our method. Concerning Fig. \ref{fig:comp_time_lp}, the method described in Rem. \ref{rem:boyd_lp} is timed in two programming languages. Our approach is faster in both executions, corroborating the claim of Rem. \ref{rem:boyd_lp}.
}
\begin{figure*}
    \centering
    \includegraphics[trim={0cm 10cm 4.0cm 1cm},clip,width=1\textwidth]{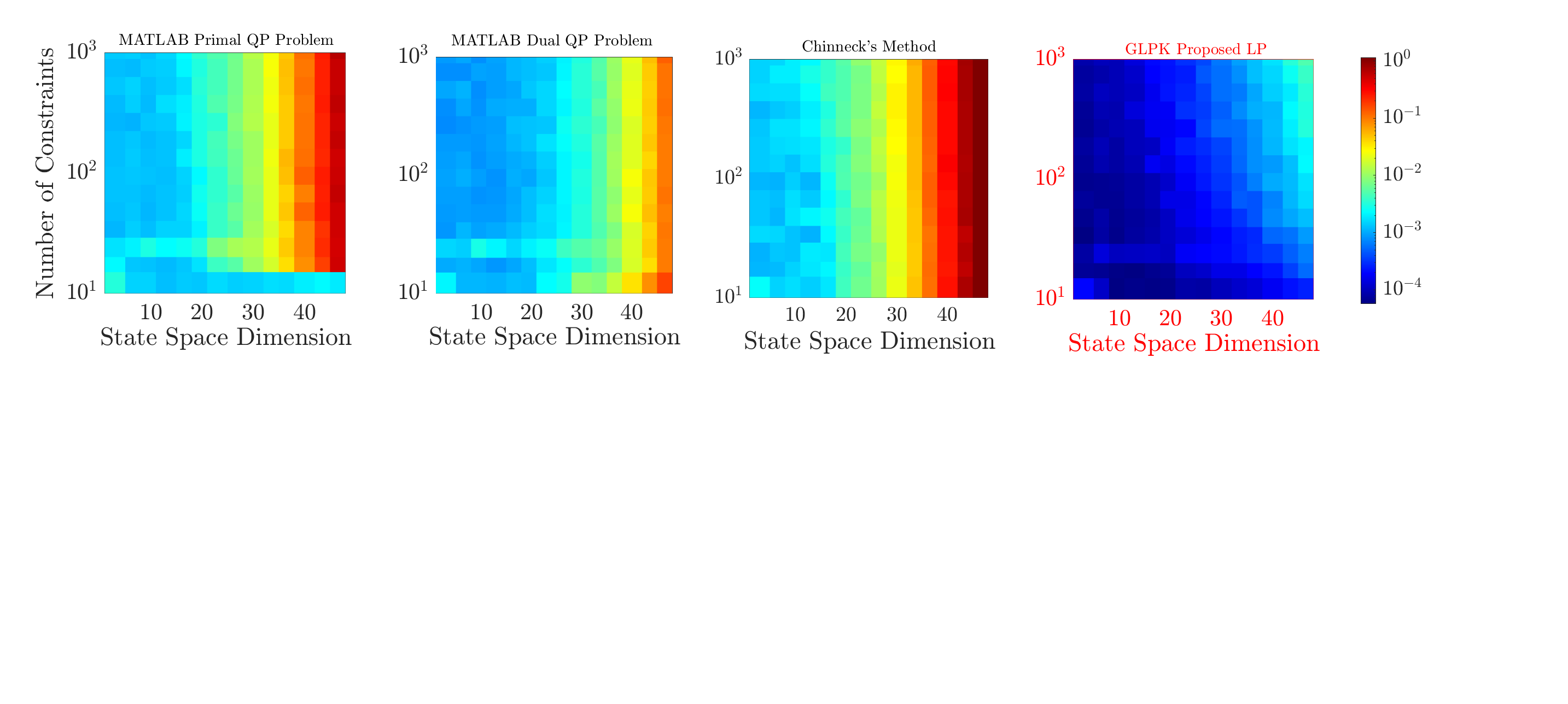}
    \caption{{  Several QP solvers compared to the proposed method. The PP (first sub-figure) the DP (second sub-figure), Chinneck's method \cite{Chinneck1996} (third sub-figure) and Proposed method \eqref{eq:dual_lp*} (fourth sub-figure).} The execution times are depicted through the colormaps in $[s]$.}
    \label{fig:comp_time_qp}
\end{figure*}
\begin{figure*}
    \centering
    \includegraphics[trim={0cm 10.5cm 3cm 1cm},clip,width=1\textwidth]{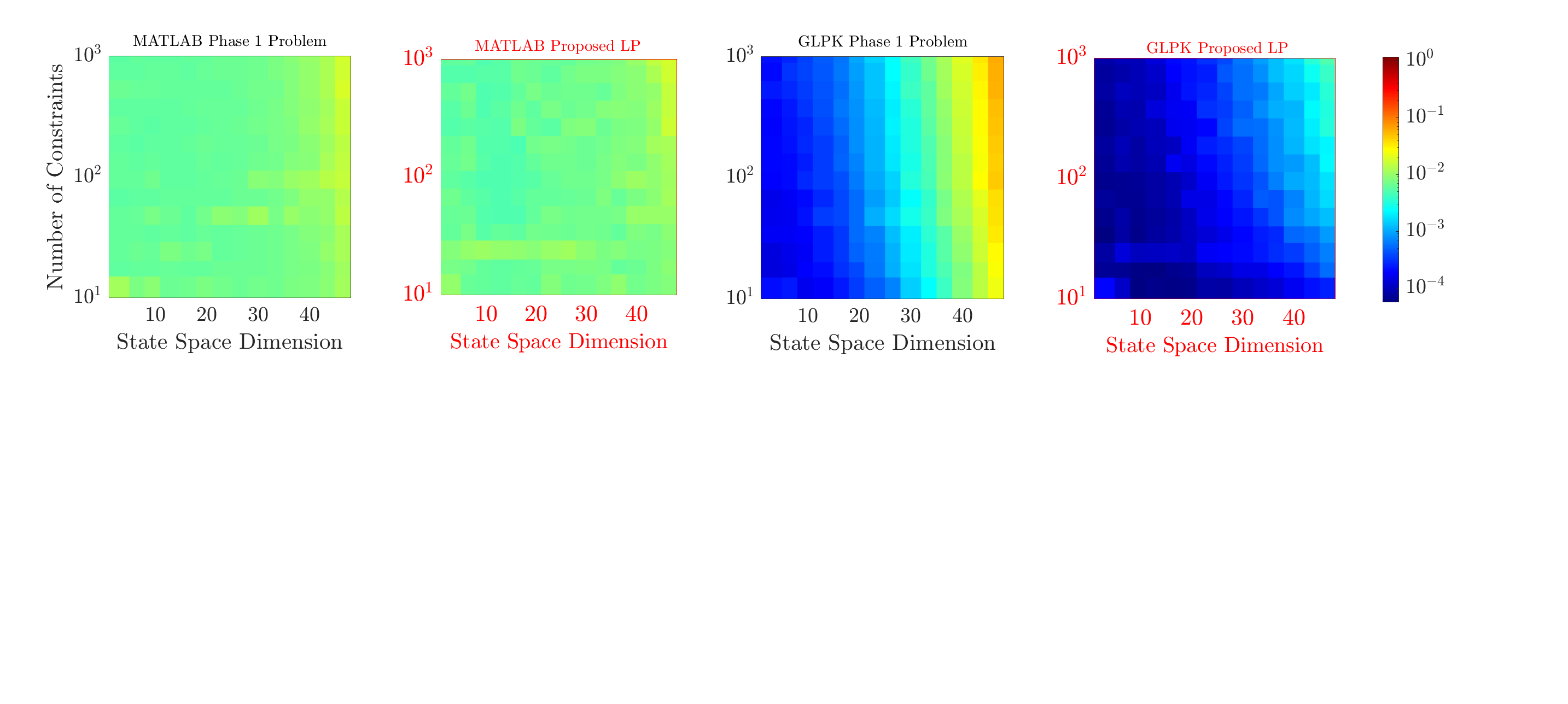}
    \caption{{  LP approaches compared to the proposed method. MATLAB implementations of \eqref{eq:lp_boyd} (first sub-figure) and the proposed one \eqref{eq:dual_lp*} (second sub-figure), GLPK implementations of \eqref{eq:lp_boyd} (third sub-figure) and the proposed one \eqref{eq:dual_lp*} (fourth sub-figure).} The execution times are depicted through the colormaps in $[s]$.}
    \label{fig:comp_time_lp}
\end{figure*}

\subsection{Constraint Selection Example}\label{sec:constr_sel}
{ 
The aim of this subsection is to demonstrate our feasibility evaluation method, i.e., Eq. \eqref{eq:dual_lp*}, in the context of control synthesis, and specifically for CBF-QP controllers (see Rem. \ref{rem:results}). The constraints as well as the matrices of problem \eqref{eq:opt_prob_qp} are evolving in time; however, for simplicity, we drop the time argument, i.e., we denote $H:\mathbb{R}_+{ \rightarrow}\mathbb{R}^{m\times m}$ a symmetric positive definite matrix, $F:\mathbb{R}_+{ \rightarrow}\mathbb{R}^m$, $A:\mathbb{R}_+{ \rightarrow}\mathbb{R}^{m\times C},B:\mathbb{R}_+{ \rightarrow}\mathbb{R}^{C}$. The decision variable corresponds to a two-dimensional input vector and the hard constraints correspond to time-varying input bounds (see also the left subfigure of Fig. \ref{fig:example}, where a snapshot of the QP is depicted). Furthermore, five soft constraints correspond to CBF/CLF conditions that can be disregarded in favor of satisfying the input bounds. The feasibility evaluation method \eqref{eq:dual_lp*} is implemented at discrete time instances to choose a feasible configuration.}
\par
In this example we solve the maxFS problem \cite{doi:10.1080/03155986.2019.1607715}, i.e., maximize the level of the configuration { at discrete time instances}. We thus begin by introducing a structure onto the space of permutations $\mathcal{P}$ defined in subsection \ref{sec:disreg_cons}. Let $\mathcal{G} = \left\{\mathcal{V},\mathcal{E}\right\}$ denote a graph, where $\mathcal{V} = \left\{ P_i \;|\; i\in\mathcal{C}_p\right\}$ denotes the vertex set and \(\mathcal{E}\) denotes the edge set with \((i,j) \in \mathcal{E}\) iff:
\(
    \left\| 
        P_i - P_j
    \right\|_1 = 2, \ i\neq j,\ i,j\in\mathcal{C}_P,
\)
where $\|\cdot\|_1$ denotes the 1-norm. In other words, two vertices of $\mathcal{G}$ are connected if the corresponding configurations differ at a single element, i.e., a single constraint changes between disregarded/not disregarded { between two neighboring configurations $P_i,P_j$}. 
The configuration graph for this example at a specific time instance is also presented in the right subfigure of Fig. \ref{fig:example}, { where the links between two nodes indicate that the corresponding configurations differ at a single constraint}.
{ In order to choose among the feasible configurations, we { employ} two algorithms, one exhaustive and one heuristic. These algorithms are not novel, i.e., are not part of the contribution of the paper, but rather are used to demonstrate the applicability of the method to feasible constraint selection}. 
\subsubsection{Greedy Approach}
A greedy approach consists of evaluating the feasibility of all possible configurations, and choosing the constraint with the maximal level. It is implemented at each time instance to choose an appropriate set { of constraints to be disregarded} in terms of Problem \eqref{eq:opt_prob_qp}. Since all possible configurations are evaluated, this algorithm scales { according} to $\mathcal{O}( 2^{C} N(C))$, where $N(C)$ is the execution time for evaluating a configuration of $C$ constraints and $C\in\mathbb{N}$ is the number of constraints. 

\subsubsection{Heuristic Search}
The heuristic method restricts the search to neighbors of the current configuration of $\mathcal{G}$. Since each node is connected to nodes whose permutation vector differs at a single digit, this yields $C$ neighbors in total. The algorithm only evaluates the neighbors of the current vertex of the graph, and therefore it scales { according} to $\mathcal{O}(N(C) C )$.
However, it does not necessarily provide the optimal configuration. 


These two search methods are employed to choose the configuration of feasible constraints and the results are presented in { Fig. \ref{fig:example}} and the video: \url{https://vimeo.com/1021254998?share=copy#t=0}, { which reveal that the method accurately labels the configurations as feasible/infeasible.} { More specifically in Fig. \ref{fig:example}, soft constraints 3,4 are not compatible with the hard constraints and the relevant configurations are thus infeasible (see graph in right figure). Any other configuration is feasible since constraints 1, 2 and 5 are compatible with the hard constraints. The maximal level configuration is selected, disregarding constraints 3 and 4.}

\begin{figure*}
    \centering
    \includegraphics[trim={0cm 14cm 27cm 0cm},clip,width=0.62\textwidth]{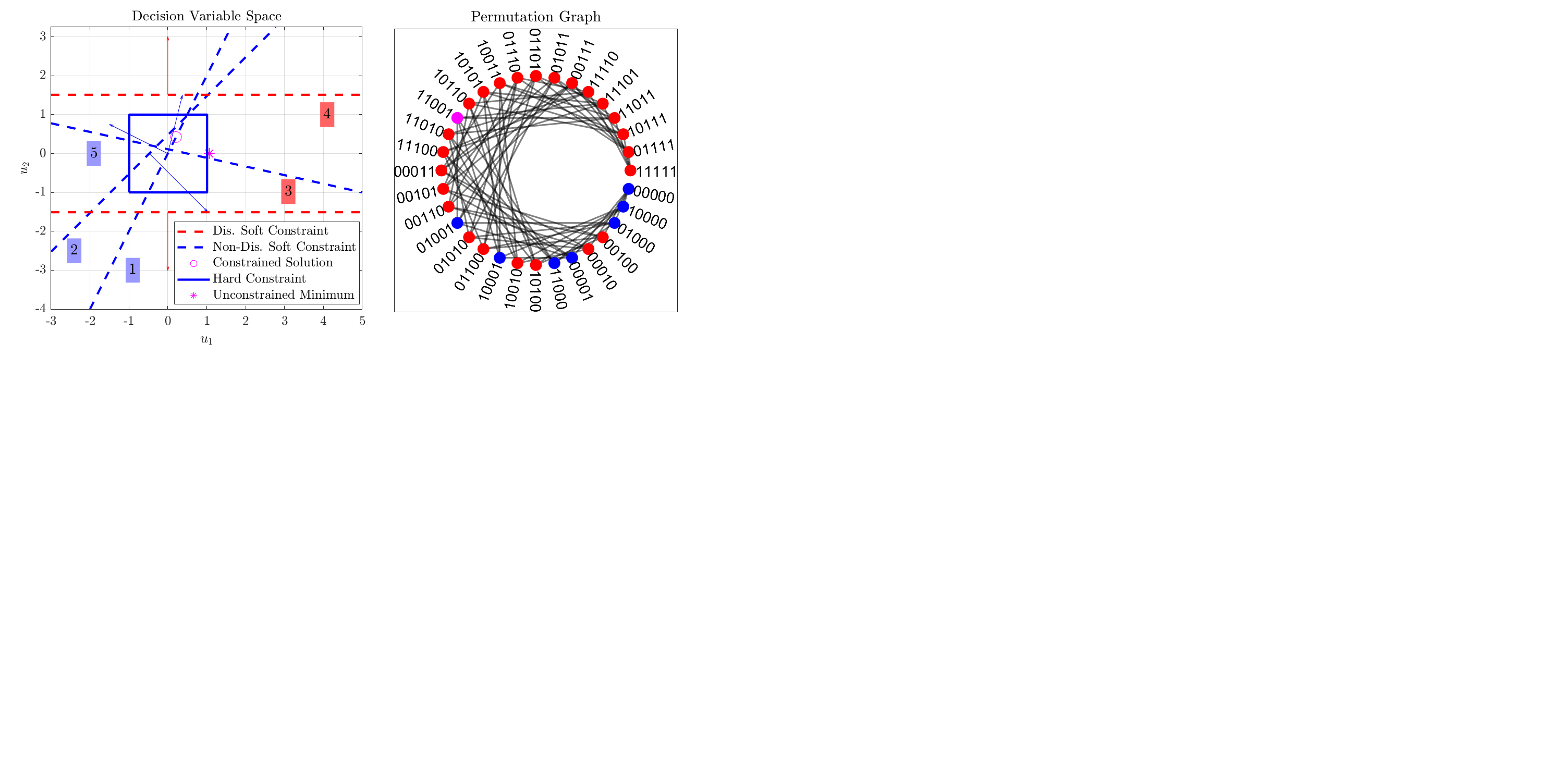}
    \caption{An example of a constrained QP problem over the decision variable space with hard constraints (solid lines), { not-disregarded} soft constraints (dashed blue lines) and { disregarded} soft constraints (red dashed lines) is depicted in the left figure. The arrows point to the half-plane of the respective constraint. The unconstrained optimal solution (magenta star) and the constrained optimal solution (magenta circle) are also depicted. { Each soft constraint is labeled through numbers from one to five.} The corresponding configuration graph is depicted in the right figure, with the feasible configurations highlighted in blue, the infeasible configurations highlighted in red and the selected configuration highlighted in magenta. For readability, the configurations are depicted as binary numbers, where ``0'' in the $i$-th digit denotes that the $i$-th constraint is disregarded and ``1'' is employed otherwise.}
    \label{fig:example}
\end{figure*}


\section{CONCLUSION}\label{sec:conclusions}
In this paper, a method for evaluating the feasibility of any configuration of soft constraints in QP problems is presented, using duality principle and a properly defined LP that can be solved more efficiently than LPs in similar existing methodologies. 
The proposed approach can serve as a basis for assessing the feasibility of online constrained-control techniques such as MPC and CBF-QPs. In our future work, we aim to develop planners to recursively maintain or minimally-relax feasibility based on our approach, { along with more complex optimization criteria than the cardinality of the constraint set, e.g., accounting for constraint hierarchy}.


\bibliographystyle{IEEEtran}
\bibliography{bib}

\begin{thebibliography}{10}
\providecommand{\url}[1]{#1}
\csname url@samestyle\endcsname
\providecommand{\newblock}{\relax}
\providecommand{\bibinfo}[2]{#2}
\providecommand{\BIBentrySTDinterwordspacing}{\spaceskip=0pt\relax}
\providecommand{\BIBentryALTinterwordstretchfactor}{4}
\providecommand{\BIBentryALTinterwordspacing}{\spaceskip=\fontdimen2\font plus
\BIBentryALTinterwordstretchfactor\fontdimen3\font minus
  \fontdimen4\font\relax}
\providecommand{\BIBforeignlanguage}[2]{{%
\expandafter\ifx\csname l@#1\endcsname\relax
\typeout{** WARNING: IEEEtran.bst: No hyphenation pattern has been}%
\typeout{** loaded for the language `#1'. Using the pattern for}%
\typeout{** the default language instead.}%
\else
\language=\csname l@#1\endcsname
\fi
#2}}
\providecommand{\BIBdecl}{\relax}
\BIBdecl

\bibitem{book_optim}
D.~P. Bertsekas, A.~Nedic, and A.~E. Ozdaglar, \emph{Convex Analysis and
  Optimization}.\hskip 1em plus 0.5em minus 0.4em\relax Athena Scientific,
  2003.

\bibitem{Schwenzer2021}
M.~Schwenzer, M.~Ay, T.~Bergs, and D.~Abel, ``Review on model predictive
  control: an engineering perspective,'' \emph{The International Journal of
  Advanced Manufacturing Technology}, vol. 117, no.~5, pp. 1327--1349, Nov
  2021.

\bibitem{8796030}
A.~D. Ames, S.~Coogan, M.~Egerstedt, G.~Notomista, K.~Sreenath, and P.~Tabuada,
  ``Control barrier functions: Theory and applications,'' in \emph{18th
  European Control Conference}, 2019, pp. 3420--3431.

\bibitem{boyd2004convex}
S.~Boyd and L.~Vandenberghe, \emph{Convex Optimization}.\hskip 1em plus 0.5em
  minus 0.4em\relax Cambridge University Press, 2004, no. pt. 1.

\bibitem{Nocedal2006-dw}
J.~Nocedal and S.~Wright, \emph{\BIBforeignlanguage{en}{Numerical
  Optimization}}, 2nd~ed., ser. Springer Series in Operations Research and
  Financial Engineering.\hskip 1em plus 0.5em minus 0.4em\relax New York, NY:
  Springer, Jul. 2006.

\bibitem{doi:10.1080/03155986.2019.1607715}
J.~W. Chinneck, ``The maximum feasible subset problem (maxfs) and
  applications,'' \emph{INFOR: Information Systems and Operational Research},
  vol.~57, no.~4, pp. 496--516, 2019.

\bibitem{782859}
P.~Sadegh, ``A maximum feasible subset algorithm with application to radiation
  therapy,'' in \emph{Proceedings of the 1999 American Control Conference (Cat.
  No. 99CH36251)}, vol.~1, 1999, pp. 405--408 vol.1.

\bibitem{Chinneck2010-qg}
J.~W. Chinneck, \emph{\BIBforeignlanguage{en}{Feasibility and infeasibility in
  optimization:}}, ser. International Series in Operations Research \&
  Management Science.\hskip 1em plus 0.5em minus 0.4em\relax New York, NY:
  Springer, Feb. 2010.

\bibitem{Chinneck1996}
------, ``An effective polynomial-time heuristic for the minimum-cardinality
  iis set-covering problem,'' \emph{Annals of Mathematics and Artificial
  Intelligence}, vol.~17, no.~1, pp. 127--144, Mar 1996.

\bibitem{10644243}
H.~Parwana, R.~Wang, and D.~Panagou, ``Algorithms for finding compatible
  constraints in receding-horizon control of dynamical systems,'' in \emph{2024
  American Control Conference (ACC)}, 2024, pp. 2074--2081.

\bibitem{Dubois1996}
D.~Dubois, H.~Fargier, and H.~Prade, ``Possibility theory in constraint
  satisfaction problems: Handling priority, preference and uncertainty,''
  \emph{Applied Intelligence}, vol.~6, no.~4, pp. 287--309, Oct 1996.

\bibitem{Campi2011}
M.~C. Campi and S.~Garatti, ``A sampling-and-discarding approach to
  chance-constrained optimization: Feasibility and optimality,''
  \emph{Journal of Optimization Theory and Applications}, vol. 148, no.~2, pp.
  257--280, Feb 2011.

\bibitem{9750913}
L.~Romao, A.~Papachristodoulou, and K.~Margellos, ``On the exact feasibility of
  convex scenario programs with discarded constraints,'' \emph{IEEE
  Transactions on Automatic Control}, vol.~68, no.~4, pp. 1986--2001, 2023.

\bibitem{ROMAO2023110601}
L.~Romao, K.~Margellos, and A.~Papachristodoulou, ``Probabilistic feasibility
  guarantees for convex scenario programs with an arbitrary number of discarded
  constraints,'' \emph{Automatica}, vol. 149, 2023.

\bibitem{clf}
E.~D. Sontag, ``A \textrm{L}yapunov-like characterization of asymptotic
  controllability,'' \emph{SIAM Journal on Control and Optimization}, vol.~21,
  no.~3, pp. 462--471, 1983.

\bibitem{10.1145/3313276.3316303}
M.~B. Cohen, Y.~T. Lee, and Z.~Song, ``Solving linear programs in the current
  matrix multiplication time,'' in \emph{Proceedings of the 51st Annual ACM
  SIGACT Symposium on Theory of Computing}.\hskip 1em plus 0.5em minus
  0.4em\relax New York, NY, USA: Association for Computing Machinery, 2019, p.
  938–942.

\end{thebibliography}

\end{document}